\newtheorem{theorem}{Theorem}
\newtheorem{axiom}{Axiom}
\newtheorem{definition}[axiom]{Definition}
\newtheorem{lemma}[theorem]{Lemma}
\newenvironment{remark}{\rem\rm}{\endrem}
\newcounter{unnumber}
\newenvironment{proof}{\prf\rm}{\hfill{$\blacksquare$}\endprf}
\newcommand{\R}{\mathbb{R}}%
\newcommand{\ol}{\overline}%
\newcommand{\ul}{\underline}%
\renewcommand{\>}{\right\rangle}
\newcommand{\<}{\left\langle}
\DeclareMathOperator*\gr{Gr}%
\DeclareMathOperator*\id{Id}%
\DeclareMathOperator*\prox{prox}%
\DeclareMathOperator*\argmin{argmin}
\DeclareMathOperator*\zer{zer}
\title{Convergence rates for forward-backward dynamical systems associated with strongly monotone inclusions}
\author{Radu Ioan Bo\c{t} \thanks{University of Vienna, Faculty of Mathematics, Oskar-Morgenstern-Platz 1, A-1090 Vienna, Austria,
email: radu.bot@univie.ac.at.} \and
Ern\"{o} Robert Csetnek \thanks {University of Vienna, Faculty of Mathematics, Oskar-Morgenstern-Platz 1, A-1090 Vienna, Austria,
email: ernoe.robert.csetnek@univie.ac.at. Research supported by FWF (Austrian Science Fund), Lise Meitner Programme, project M 1682-N25.}}
\begin{document}
\maketitle

\noindent \textbf{Abstract.} We investigate the convergence rates of the trajectories generated by implicit first and second order dynamical systems 
associated to the determination of the zeros of the sum of a maximally monotone operator and a monotone and Lipschitz continuous one in a real Hilbert space. 
We show that these trajectories strongly converge with exponential rate to a zero of the sum, provided the latter is strongly monotone. We derive from here convergence rates 
for the trajectories generated by dynamical systems associated to the minimization of the sum of a proper, convex and lower semicontinuous function with a smooth convex one provided 
the objective function fulfills a strong convexity assumption. In the particular case of minimizing a smooth and strongly convex function, we prove that its values converge along
the trajectory to its minimum value with exponential rate, too.\vspace{1ex}

\noindent \textbf{Key Words.} dynamical systems, strongly monotone inclusions, continuous forward-backward method, convergence 
rates, convex optimization problems\vspace{1ex}

\noindent \textbf{AMS subject classification.} 34G25, 47J25, 47H05, 90C25

\section{Introduction and preliminaries}\label{sec-intr}

The main topic of this paper is the investigation of convergence rates for implicit dynamical systems associated with monotone inclusion problems of the form
\begin{equation}\label{monpr}
\mbox{find} \ x^* \in {\cal H} \ \mbox{such that} \ 0 \in A x^* + B x^*, 
 \end{equation}
where ${\cal H}$ is a real Hilbert space, $A: {\cal H} \rightrightarrows {\cal H}$ is a maximally monotone operator, $B: {\cal H} \rightarrow {\cal H}$ is a monotone and $\frac{1}{\beta}$-Lipschitz continuous operator for $\beta > 0$ and 
$A+B$ is $\rho$-strongly monotone for $\rho > 0$. Dynamical systems of implicit type have been already considered in the literature in 
\cite{bolte-2003, abbas-att-arx14, att-sv2011, abbas-att-sv, antipin, att-alv-sv, b-c-dyn-KM, b-c-dyn-pen, b-c-dyn-sec-ord}.

We deal in a first instance with the first order dynamical system with variable relaxation parameters
\begin{equation}\label{f-o-dyn-syst-fb}\left\{
\begin{array}{ll}
\dot x(t)=\lambda(t)\left[J_{\eta A}\Big(x(t)-\eta B(x(t))\Big)-x(t)\right]\\
x(0)=x_0,
\end{array}\right.\end{equation}
where $x_0 \in {\cal H}$, $\lambda : [0,+\infty) \rightarrow [0,\infty)$ is a Lebesgue measurable function and $J_{\eta A}$ denotes the resolvent of the operator $\eta A$ for $\eta > 0$.

We notice that Abbas and Attouch considered in \cite[Section 4.2]{abbas-att-arx14} the dynamical system of same type
\begin{equation}\label{syst-abb-att}\left\{
\begin{array}{ll}
\dot x(t)+x(t)=\prox\nolimits_{\mu\Phi}\big(x(t)-\mu B(x(t))\big)\\
x(0)=x_0
\end{array}\right.\end{equation}
in connection to the determination of the zeros of $\partial \Phi + B$, where $\Phi:{\cal H}\rightarrow\R\cup\{+\infty\}$ is a proper, convex and lower semicontinuous function, 
$B:{\cal H}\rightarrow {\cal H}$ is a cocoercive operator, $\partial \Phi$ denotes the convex subdifferential of $\Phi$ and $\prox\nolimits_{\mu\Phi}$ denotes 
the proximal point operator of $\mu\Phi$.

Before that, Antipin in \cite{antipin} and Bolte in \cite{bolte-2003} studied the convergence of the trajectories generated by
\begin{equation}\label{syst-bolte}\left\{
\begin{array}{ll}
\dot x(t)+x(t)=P_C\big(x(t)-\mu\nabla g(x(t))\big)\\
x(0)=x_0
\end{array}\right.\end{equation}
to a minimizer of the smooth and convex function $g:{\cal H}\rightarrow\R$ over the nonempty, convex and closed set $C \subseteq {\cal H}$, where $\mu > 0$ and $P_C$ denotes the projection operator on the set $C$.

In the second part of the paper we approach the monotone inclusion \eqref{monpr} via the second order dynamical system with variable damping and relaxation parameters
\begin{equation}\label{s-o-dyn-syst-fb}\left\{
\begin{array}{ll}
\ddot x(t) + \gamma(t) \dot x(t) + \lambda(t)\left[x(t)-J_{\eta A}\Big(x(t)-\eta B(x(t))\Big)\right]=0\\
x(0)=u_0, \dot x(0)=v_0,
\end{array}\right.\end{equation}
where $u_0, v_0 \in {\cal H}$, $\lambda : [0,+\infty) \rightarrow [0,\infty)$ and $\gamma : [0,+\infty) \rightarrow [0,\infty)$ are Lebesgue measurable functions, and $\eta >0$.

Second order dynamical systems of the form
\begin{equation}\label{alv-att-nonexp-intr}\left\{
\begin{array}{ll}
\ddot x(t) + \gamma\dot x(t) + x(t)-Tx(t) = 0\\
x(0)=u_0, \dot x(0)=v_0,
\end{array}\right.\end{equation}
for $\gamma >0$ and $T:{\cal H} \rightarrow{\cal H}$ a nonexpansive operator, have been treated by Attouch and Alvarez in \cite{att-alv} in connection to the problem of approaching the fixed points of $T$.

For the minimization of the smooth and convex function $g:{\cal H}\rightarrow\R$ over the nonempty, convex and closed set $C \subseteq {\cal H}$, a continuous in time second order gradient-projection approach 
has been considered in \cite{att-alv, antipin}, having as starting point the dynamical system
\begin{equation}\label{gr-pr}\left\{
\begin{array}{ll}
\ddot x(t) + \gamma\dot x(t) + x(t)-P_C(x(t)-\eta\nabla g (x(t))) = 0\\
x(0)=u_0, \dot x(0)=v_0,
\end{array}\right.\end{equation}
with constant damping parameter $\gamma > 0$ and constant step size $\eta > 0$.

For an exhaustive asymptotic analysis of the first and second order dynamical systems \eqref{f-o-dyn-syst-fb} and \eqref{s-o-dyn-syst-fb}, in case $B$ is cocoercive, we refer the reader to \cite{b-c-dyn-KM} and \cite{b-c-dyn-sec-ord}, respectively.
According to the above-named works, one can expect under mild assumptions on the relaxation and, in the second order setting, on the damping functions, that the generated trajectories converge to a zero of $A+B$. The main scope 
of this paper is to show that when weakening the assumptions on $B$ to monotonicity and Lipschitz continuity, however, provided that $A+B$ is strongly monotone, the trajectories converge strongly to the unique
zero of $A+B$ with an exponential rate. Exponential convergence rates have been obtained also by Antipin in \cite{antipin} for the dynamical systems \eqref{syst-bolte} and \eqref{gr-pr}, by imposing for the 
smooth function $g$ supplementary strong convexity assumptions.

We transfer the results obtained for both first and second order dynamical systems to optimization problems of the form
\begin{equation}\label{convopt}
\min_{x \in {\cal H}} \ f(x) + g(x),
\end{equation}
where $f : {\cal H} \to \R\cup\{+\infty\}$ is a proper, convex and lower semicontinuous function, $g: {\cal H} \to \R$ is a convex  and (Fr\'echet) differentiable function with $\frac{1}{\beta}$-Lipschitz continuous gradient for $\beta > 0$ 
and $f+g$ is  $\rho$-strongly convex for $\rho > 0$, by taking into consideration that its set of minimizers coincides with the solution set of the monotone inclusion problem
\begin{equation*}
\mbox{find} \ x^* \in {\cal H} \ \mbox{such that} \ 0 \in \partial f(x^*) + \nabla g(x^*). 
 \end{equation*}
When further particularizing this context to the one of solving  minimization problems like
\begin{equation}\label{gopt}
\min_{x \in {\cal H}} \ g(x),
\end{equation}
where $g: {\cal H} \to \R$ is a $\rho$-strongly convex and (Fr\'echet) differentiable function with $\frac{1}{\beta}$-Lipschitz continuous gradient for $\rho > 0$ and $\beta > 0$, we show that the values of $g$ converge along the trajectories generated 
by the corresponding first and second order dynamical systems to its minimum value also with exponential rate.

The rest of this section is devoted to some notations and definitions used in the paper. We denote by ${\cal H}$ a real Hilbert space with inner product
$\langle\cdot,\cdot\rangle$ and corresponding norm $\|\cdot\|=\sqrt{\langle \cdot,\cdot\rangle}$. For an arbitrary set-valued operator $A:{\cal H}\rightrightarrows {\cal H}$ we denote by 
$\gr A=\{(x,u)\in {\cal H}\times {\cal H}:u\in Ax\}$ its graph. We use also the notation $\zer A=\{x\in{\cal{H}}:0\in Ax\}$ for the set of zeros of $A$. 
We say that $A$ is \emph{monotone}, if $\langle x-y,u-v\rangle\geq 0$ for all $(x,u),(y,v)\in\gr A$. A monotone operator $A$ is said to be \emph{maximally monotone}, if there exists no proper monotone extension of the graph of $A$ on 
${\cal H}\times {\cal H}$. The \emph{resolvent} of $A$, $J_A:{\cal H} \rightrightarrows {\cal H}$, is defined by $J_A=(\id +A)^{-1}$, where $\id : {\cal H} \rightarrow {\cal H}$ denotes the identity operator on ${\cal H}$. 
If $A$ is maximally monotone, then $J_A:{\cal H} \rightarrow {\cal H}$ is single-valued and 
maximally monotone (see \cite[Proposition 23.7 and Corollary 23.10]{bauschke-book}). For an arbitrary $\gamma>0$ we have (see \cite[Proposition 23.2]{bauschke-book})
\begin{equation}p\in J_{\gamma A}x \ \mbox{if and only if} \ (p,\gamma^{-1}(x-p))\in\gr A.\end{equation}
The operator $A$ is said to be \emph{$\rho$-strongly monotone} for $\rho > 0$, 
if $\langle x-y,u-v\rangle\geq \rho\|x-y\|^2$ for all $(x,u),(y,v)\in\gr A$. 

As in \cite{att-sv2011, abbas-att-sv}, we consider the following definition of an absolutely continuous function.

\begin{definition}\label{abs-cont} \rm (see, for instance, \cite{att-sv2011, abbas-att-sv}) A function $x:[0,b]\rightarrow {\cal H}$ (where $b>0$) is said to be absolutely continuous if one of the 
following equivalent properties holds:

(i)  there exists an integrable function $y:[0,b]\rightarrow {\cal H}$ such that $$x(t)=x(0)+\int_0^t y(s)ds \ \ \forall t\in[0,b];$$

(ii) $x$ is continuous and its distributional derivative is Lebesgue integrable on $[0,b]$; 

(iii) for every $\varepsilon > 0$, there exists $\eta >0$ such that for any finite family of intervals $I_k=(a_k,b_k) \subseteq [0,b]$ we have the implication
$$\left(I_k\cap I_j=\emptyset \mbox{ and }\sum_k|b_k-a_k| < \eta\right)\Longrightarrow \sum_k\|x(b_k)-x(a_k)\| < \varepsilon.$$
\end{definition}

\begin{remark}\label{rem-abs-cont}\rm (a) It follows from the definition that an absolutely continuous function is differentiable almost 
everywhere, its derivative coincides with its distributional derivative almost everywhere and one can recover the function from its derivative $\dot x=y$ by the integration formula (i). 

(b) If $x:[0,b]\rightarrow {\cal H}$, where $b>0$, is absolutely continuous and $B:{\cal H}\rightarrow {\cal H}$ is $L$-Lipschitz continuous for $L\geq 0$, then the function $z=B\circ x$ is absolutely continuous, too.
This can be easily seen by using the characterization of absolute continuity in
Definition \ref{abs-cont}(iii). Moreover, $z$ is differentiable almost everywhere on $[0,b]$ and the inequality $\|\dot z (t)\|\leq L\|\dot x(t)\|$ holds for almost every $t \in [0,b]$.   
\end{remark}

\section{Converges rates for first order dynamical systems}\label{sec2}

The starting point of the investigations we carry out in this section is the first order dynamical system \eqref{f-o-dyn-syst-fb} that we formulated in relation to the monotone inclusion problem \eqref{monpr}. 
We say that $x:[0,+\infty)\rightarrow{\cal H}$ is a \emph{strong global solution} of \eqref{f-o-dyn-syst-fb}, if the following properties are satisfied:

(i) $x:[0,+\infty)\rightarrow {\cal H}$ is \emph{locally absolutely continuous}, that is, absolutely continuous on each interval $[0,b]$ for $0<b<+\infty$; 

(ii) For almost every $t\in[0,+\infty)$ it holds $\dot x(t)=\lambda(t)\left[J_{\eta A}\Big(x(t)-\eta B(x(t))\Big)-x(t)\right]$;

(iii) $x(0)=x_0$.

The existence and uniqueness of strong global solutions of the system \eqref{f-o-dyn-syst-fb} follow from the Cauchy-Lipschitz-Picard Theorem, by noticing that the operator 
$T=J_{\eta A}\circ (\id-\eta B)-\id$ is Lipschitz continuous (see also \cite[Section 2]{b-c-dyn-KM}). 

The following result can bee seen as the continuous counterpart of \cite[Proposition 25.9]{bauschke-book}, where it is shown that the sequence iteratively generated by the 
forward-backward algorithm linearly converges to the unique solution of \eqref{monpr}, provided that one of the two involved operators is strongly monotone.

\begin{theorem}\label{first-order-a+b} Let $A:{\cal H}\rightrightarrows {\cal H}$ be a maximally monotone operator,
$B:{\cal H}\rightarrow {\cal H}$ a monotone and $\frac{1}{\beta}$-Lipschitz continuous operator for $\beta > 0$ such that $A+B$ is $\rho$-strongly monotone for $\rho>0$ and $x^*$ be the unique point in $\zer(A+B)$.
Let $\lambda:[0,+\infty)\rightarrow[0,+\infty)$ be a Lebesgue measurable function such that there exist real numbers
$\ul\lambda$ and $\ol\lambda$ fulfilling $$0<\ul\lambda\leq\inf_{t\geq 0}\lambda(t)\leq \sup_{t\geq 0}\lambda(t)\leq\ol\lambda.$$
Chose $\alpha >0$ and $\eta > 0$ such that
$$\alpha <2\rho\beta^2\ul\lambda \ \mbox{and} \  \frac{1}{\beta} + \frac{\ol\lambda}{2\alpha} \leq \rho+ \frac{1}{\eta}.$$
If $x_0\in {\cal H}$ and $x:[0,+\infty)\rightarrow{\cal H}$ is the unique strong global solution of the dynamical system \eqref{f-o-dyn-syst-fb}, then
for every $t \in [0,+\infty)$ one has $$\|x(t)-x^*\|^2\leq \|x_0-x^*\|^2\exp(-Ct),$$
where $C:=\frac{2\rho\ul\lambda-\frac{\alpha}{\beta^2}}{2\rho+\frac{1}{\eta}}>0$.  
\end{theorem}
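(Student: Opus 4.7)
The plan is to carry out a Lyapunov analysis with $V(t) := \|x(t) - x^*\|^2$, establish the pointwise differential inequality $\dot V(t) + CV(t) \leq 0$ for almost every $t \geq 0$, and conclude by an integrating-factor argument.

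First I would abbreviate $y(t) := J_{\eta A}\bigl(x(t) - \eta B(x(t))\bigr)$, so that \eqref{f-o-dyn-syst-fb} reads $\dot x(t) = \lambda(t)(y(t) - x(t))$. The local absolute continuity of $x$ transfers to $V$, giving $\dot V(t) = 2\lambda(t)\langle x(t) - x^*, y(t) - x(t)\rangle$ almost everywhere. Using the resolvent characterization $\tfrac{1}{\eta}(x(t) - y(t)) - B(x(t)) \in A(y(t))$ and $0 \in (A+B)(x^*)$, the $\rho$-strong monotonicity of $A+B$ applied to the pair $(y(t),\tfrac{1}{\eta}(x(t)-y(t))-B(x(t))+B(y(t)))$ and $(x^*,0)$ yields
\begin{equation*}
\tfrac{1}{\eta}\langle y(t) - x^*, x(t) - y(t)\rangle \geq \rho\|y(t) - x^*\|^2 + \langle y(t) - x^*, B(x(t)) - B(y(t))\rangle.
\end{equation*}
Bounding the last inner product from below by $-\tfrac{1}{\beta}\|y(t) - x^*\|\,\|x(t) - y(t)\|$ via Cauchy-Schwarz and the Lipschitz continuity of $B$, and applying the polarization identity $2\langle y - x^*, x - y\rangle = \|x-x^*\|^2 - \|x - y\|^2 - \|y - x^*\|^2$ on the left, I would arrive at the key lower estimate
\begin{equation*}
V(t) \geq \|x(t) - y(t)\|^2 + (1 + 2\eta\rho)\|y(t) - x^*\|^2 - \tfrac{2\eta}{\beta}\|y(t) - x^*\|\,\|x(t) - y(t)\|.
\end{equation*}

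In parallel, the same polarization identity gives the exact expression $\dot V(t) = \lambda(t)\bigl[\|y(t) - x^*\|^2 - \|x(t) - y(t)\|^2 - V(t)\bigr]$. I would then add $CV(t)$ and subtract $(\lambda(t) - C)$ times the lower bound for $V$ obtained above, noting that $C < \ul\lambda \leq \lambda(t)$ follows from the explicit form of $C$, and reducing the target $\dot V + CV \leq 0$ to showing that a quadratic form in $\|y(t) - x^*\|$ and $\|x(t) - y(t)\|$ is nonpositive. A direct computation using $C(2\rho + 1/\eta) = 2\rho\ul\lambda - \alpha/\beta^2$ shows that the coefficient of $\|y(t) - x^*\|^2$ collapses to $-[2\eta\rho(\lambda(t) - \ul\lambda) + \eta\alpha/\beta^2]$, which is nonpositive, and the coefficient of $\|x(t) - y(t)\|^2$ to $-(2\lambda(t) - C)$, which is strictly negative. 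The surviving cross term $(\lambda(t) - C)\tfrac{2\eta}{\beta}\|y(t) - x^*\|\,\|x(t) - y(t)\|$ is handled by Young's inequality with a parameter depending on $\alpha$, where the hypothesis $\tfrac{1}{\beta} + \tfrac{\ol\lambda}{2\alpha} \leq \rho + \tfrac{1}{\eta}$ is precisely what is needed to secure the discriminant condition uniformly over $\lambda(t) \in [\ul\lambda, \ol\lambda]$. Calibrating this Young parameter against the full admissible range of $\lambda(t)$ is the main technical hurdle.

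Once $\dot V(t) + CV(t) \leq 0$ is established a.e., the conclusion is immediate: the absolute continuity of $V$ gives $\frac{d}{dt}\bigl[e^{Ct}V(t)\bigr] = e^{Ct}(\dot V(t) + CV(t)) \leq 0$ a.e., so integration on $[0,t]$ yields $e^{Ct}V(t) \leq V(0)$, i.e., $\|x(t) - x^*\|^2 \leq \|x_0 - x^*\|^2 e^{-Ct}$, as claimed.
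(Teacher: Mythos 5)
Your overall strategy (the Lyapunov function $V=\|x-x^*\|^2$, a differential inequality $\dot V+CV\leq 0$, then an integrating factor) coincides with the paper's, and everything up to and including your ``key lower estimate'' for $V(t)$ is correct. The gap is in the final quadratic-form step. With $a=\|y(t)-x^*\|$ and $b=\|x(t)-y(t)\|$, your computation (which I agree with) reduces the claim to the nonpositivity of
$$-\Bigl[2\eta\rho\bigl(\lambda(t)-\ul\lambda\bigr)+\tfrac{\eta\alpha}{\beta^2}\Bigr]a^2-\bigl(2\lambda(t)-C\bigr)b^2+\bigl(\lambda(t)-C\bigr)\tfrac{2\eta}{\beta}\,ab,$$
i.e.\ to the discriminant condition $\frac{\eta}{\beta^2}(\lambda(t)-C)^2\leq\bigl[2\rho(\lambda(t)-\ul\lambda)+\frac{\alpha}{\beta^2}\bigr](2\lambda(t)-C)$. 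This is \emph{not} implied by the hypotheses: the assumption $\frac{1}{\beta}+\frac{\ol\lambda}{2\alpha}\leq\rho+\frac{1}{\eta}$ only bounds $\frac{1}{\eta}$ from below, and whenever $\rho\geq\frac{1}{\beta}+\frac{\ol\lambda}{2\alpha}$ it is satisfied by \emph{every} $\eta>0$, while the left-hand side of your discriminant condition grows linearly in $\eta$ and the right-hand side stays bounded (as $\eta\to\infty$ one has $C\to\ul\lambda-\frac{\alpha}{2\rho\beta^2}<\ul\lambda$). Concretely, take $\beta=1$, $\rho=2$, $\lambda\equiv 1$, $\alpha=1$, $\eta=100$: all hypotheses of the theorem hold, $C=3/4.01\approx 0.748$, and your upper bound for $\dot V+CV$ becomes $-100\,a^2-1.25\,b^2+50.4\,ab$, which is positive for instance at $b=20a$. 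No choice of Young parameter can repair this, since the discriminant condition is necessary and sufficient for absorbing the cross term. The loss occurs because your lower estimate for $V$ can itself be very negative when $\eta$ is large, so multiplying it by $-(\lambda(t)-C)$ injects a large positive contribution.

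The paper's proof avoids attaching the factor $\eta$ to the cross term. It splits $y-x^*=(y-x)+(x-x^*)$ inside $\langle y-x^*,B(y)-B(x)\rangle$ and bounds $\langle y-x,B(y)-B(x)\rangle\leq\frac{1}{\beta}\|y-x\|^2$ and $\langle x-x^*,B(y)-B(x)\rangle\leq\frac{\alpha}{2\beta^2}\|x-x^*\|^2+\frac{1}{2\alpha}\|y-x\|^2$, while keeping $\frac{1}{\eta}\langle y-x^*,x-y\rangle=-\frac{1}{\eta}\|y-x\|^2-\frac{1}{\eta\lambda(t)}\dot h(t)$ with $h=\frac12\|x-x^*\|^2$. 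Then $\eta$ only enters the coefficient of $\|y-x\|^2$, namely $\rho+\frac{1}{\eta}-\frac{1}{\beta}-\frac{\lambda(t)}{2\alpha}$, which is nonnegative precisely by the stated hypothesis, and the coefficient of $h$ is $2\rho-\frac{\alpha}{\beta^2\lambda(t)}>0$ by $\alpha<2\rho\beta^2\ul\lambda$; dropping the $\|y-x\|^2$ term yields $\dot h+Ch\leq 0$. If you redo your last step with that splitting, your argument closes and is then essentially the paper's proof.
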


\begin{proof} Notice that $B$ is a maximally monotone operator (see \cite[Corollary 20.25]{bauschke-book}) and, since $B$ has full 
domain, $A+B$ is maximally monotone, too (see \cite[Corollary 24.4]{bauschke-book}). Therefore, due to the strong monotonicity of $A+B$, $\zer(A+B)$ is a singleton (see \cite[Corollary 23.37]{bauschke-book}). 

A direct consequence of \eqref{f-o-dyn-syst-fb} and of the definition of the resolvent is the inclusion 
$$-\frac{1}{\eta\lambda(t)}\dot x(t)-B(x(t))+B\left(\frac{1}{\lambda(t)}\dot x(t)+x(t)\right)\in (A+B)\left(\frac{1}{\lambda(t)}\dot x(t)+x(t)\right),$$
which holds for almost every $t\in[0,+\infty)$. Combining it with $0\in(A+B)(x^*)$ and the strong monotonicity of $A+B$, it yields for almost every $t \in [0,+\infty)$
\begin{align*}
& \rho\left\|\frac{1}{\lambda(t)}\dot x(t)+x(t)-x^*\right\|^2\leq \\
& \<\frac{1}{\lambda(t)}\dot x(t)+x(t)-x^*, -\frac{1}{\eta\lambda(t)}\dot x(t)-B(x(t))+B\left(\frac{1}{\lambda(t)}\dot x(t)+x(t)\right)\>.
\end{align*}

By using the notation $h(t)=\frac{1}{2}\|x(t)-x^*\|^2$ for $t \in [0,+\infty)$, the Cauchy-Schwartz inequality, the Lipschitz property of $B$ and the fact that $\dot h(t) = \langle x(t) - x^*, \dot x(t) \rangle$, 
we deduce that for almost every $t \in [0,+\infty)$
\begin{align*}
\rho\left\|\frac{1}{\lambda(t)}\dot x(t)+x(t)-x^*\right\|^2 \!\!\!\leq  &  \!-\frac{1}{\eta\lambda^2(t)}\|\dot x(t)\|^2+\frac{1}{\lambda(t)}\!\<\dot x(t),B\!\left(\frac{1}{\lambda(t)}\dot x(t)+x(t)\right)-B(x(t))\>\\
& \!-\frac{1}{\eta\lambda(t)}\dot h(t)+\<x(t)-x^* , B\left(\frac{1}{\lambda(t)}\dot x(t)+x(t)\right)-B(x(t))\> \\
\leq & -\frac{1}{\eta\lambda^2(t)}\|\dot x(t)\|^2+\frac{1}{\beta\lambda^2(t)}\|\dot x(t)\|^2-\frac{1}{\eta\lambda(t)}\dot h(t)\\
& \! + \frac{1}{\beta\lambda(t)} \|x(t) - x^*\| \|\dot x(t) \|\\
\leq & -\frac{1}{\eta\lambda^2(t)}\|\dot x(t)\|^2+\frac{1}{\beta\lambda^2(t)}\|\dot x(t)\|^2-\frac{1}{\eta\lambda(t)}\dot h(t)\\
& \! +\frac{\alpha}{\beta^2\lambda(t)}h(t) +\frac{1}{2\alpha\lambda(t)}\|\dot x(t)\|^2.
\end{align*}
As 
$$\rho\left\|\frac{1}{\lambda(t)}\dot x(t)+x(t)-x^*\right\|^2=\frac{\rho}{\lambda^2(t)}\|\dot x(t)\|^2+\frac{2\rho}{\lambda(t)}\dot h(t)+2\rho h(t),$$
we obtain for almost every $t \in [0,+\infty)$ the inequality
\begin{align*}
& \left(\frac{2\rho}{\lambda(t)}+\frac{1}{\eta\lambda(t)}\right)\dot h(t)+\left(2\rho-\frac{\alpha}{\beta^2\lambda(t)}\right)h(t) +\\
& \left(\frac{\rho}{\lambda^2(t)}+\frac{1}{\eta\lambda^2(t)}-\frac{1}{\beta\lambda^2(t)}-\frac{1}{2\alpha\lambda(t)}\right)\|\dot x(t)\|^2\leq 0.
\end{align*}
However, the way in which the involved parameters were chosen imply for almost every $t\in[0,+\infty)$ that
$$\left(\frac{2\rho}{\lambda(t)}+\frac{1}{\eta\lambda(t)}\right)\dot h(t)+\left(2\rho-\frac{\alpha}{\beta^2\lambda(t)}\right)h(t)\leq 0$$
or, equivalently,
$$\dot h(t)+\frac{2\rho\lambda(t)-\frac{\alpha}{\beta^2}}{2\rho+\frac{1}{\eta}}h(t)\leq 0.$$
This further implies 
$$\dot h(t)+Ch(t)\leq 0$$
for almost every $t\in[0,+\infty)$. By multiplying this inequality with $\exp(Ct)$ and integrating from $0$ to $T$, where $T\geq 0$, one easily obtains the conclusion.  
\end{proof}

We come now to the convex optimization problem \eqref{convopt} and notice that, 
since $\argmin (f+g) = \zer (\partial (f+g)) = \zer (\partial f + \nabla g)$, one can approach this set by means of the trajectories of the dynamical system \eqref{f-o-dyn-syst-fb} written for $A=\partial f$ and $B = \nabla g$. Here, 
$\partial f : {\cal H} \rightrightarrows {\cal H}$, defined by 
$$\partial f(x)=\{u\in {\cal H}:f(y)\geq f(x)+\<u,y-x\> \ \forall y\in {\cal H}\},$$
if $f(x) \in \R$ and $\partial f(x) = \emptyset$, otherwise, denotes the \emph{convex subdifferential} of $f$, which is a maximally monotone operator, provided that $f$ is proper, convex and 
lower semicontinuous (see \cite{rock}). We notice that, for $\eta > 0$, 
the resolvent of $\eta \partial f$ is given by $J_{\eta \partial f}=\prox_{\eta f}$ (see \cite{bauschke-book}),
where $\prox_{\eta f}:{\cal H}\rightarrow {\cal H}$,
\begin{equation}\label{prox-def}\prox\nolimits_{\eta f}(x)=\argmin_{y\in {\cal H}}\left \{f(y)+\frac{1}{2\eta}\|y-x\|^2\right\},
\end{equation}
denotes the \emph{proximal point operator} of $\eta f$. This being said, the dynamical system \eqref{f-o-dyn-syst-fb} becomes
\begin{equation}\label{f-o-dyn-syst-fb-opt}\left\{
\begin{array}{ll}
\dot x(t)=\lambda(t)\left[\prox_{\eta f}\Big(x(t)-\eta \nabla g(x(t))\Big)-x(t)\right]\\
x(0)=x_0.
\end{array}\right.\end{equation}
The following result is a direct consequence of Theorem \ref{first-order-a+b}. Let us also notice that $f+g$ is said to be $\rho$-strongly convex for $\rho>0$, 
if $f + g -\frac{\rho}{2}\|\cdot\|^2$ is a convex function. In this situation $\partial(f+g) = \partial f + \nabla g$ is a $\rho$-strongly monotone operator (see \cite[Example 22.3(iv)]{bauschke-book}.)

\begin{theorem}\label{first-order-a+b-opt} Let $f:{\cal H}\rightarrow \R\cup\{+\infty\}$ be a proper, convex 
and lower semicontinuous function, $g:{\cal H}\rightarrow \R$ be a convex and (Fr\'{e}chet) differentiable function with $\frac{1}{\beta}$-Lipschitz continuous gradient for $\beta > 0$ such that 
$f+g$ is $\rho$-strongly convex for $\rho>0$ and  $x^*$ be the unique minimizer of $f+g$ over ${\cal H}$. Let $\lambda:[0,+\infty)\rightarrow[0,+\infty)$ be a Lebesgue measurable function such that there exist real numbers
$\ul\lambda$ and $\ol\lambda$ fulfilling $$0<\ul\lambda\leq\inf_{t\geq 0}\lambda(t)\leq \sup_{t\geq 0}\lambda(t)\leq\ol\lambda.$$
Chose $\alpha >0$ and $\eta > 0$ such that
$$\alpha <2\rho\beta^2\ul\lambda \ \mbox{and} \  \frac{1}{\beta} + \frac{\ol\lambda}{2\alpha} \leq \rho+ \frac{1}{\eta}.$$
If $x_0\in {\cal H}$ and $x:[0,+\infty)\rightarrow{\cal H}$ is the unique strong global solution of the dynamical system \eqref{f-o-dyn-syst-fb-opt}, then
for every $t \in [0.+\infty)$ one has $$\|x(t)-x^*\|^2\leq \|x_0-x^*\|^2\exp(-Ct),$$
where $C:=\frac{2\rho\ul\lambda-\frac{\alpha}{\beta^2}}{2\rho+\frac{1}{\eta}}>0$.  
\end{theorem}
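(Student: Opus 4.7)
The plan is to recognize that Theorem~\ref{first-order-a+b-opt} is essentially a specialization of Theorem~\ref{first-order-a+b} to the subdifferential/gradient setting, so the whole proof amounts to verifying that the operators $A := \partial f$ and $B := \nabla g$ satisfy the hypotheses of the earlier theorem, and that the dynamical system \eqref{f-o-dyn-syst-fb-opt} is the instance of \eqref{f-o-dyn-syst-fb} obtained by this identification.

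First I would note that since $f$ is proper, convex and lower semicontinuous, $A = \partial f$ is maximally monotone (this is standard, cited from \cite{rock} in the excerpt). Since $g$ is convex and differentiable with $\frac{1}{\beta}$-Lipschitz continuous gradient, $B = \nabla g$ is monotone and $\frac{1}{\beta}$-Lipschitz continuous. The strong convexity of $f+g$ with constant $\rho$, together with the identity $\partial(f+g) = \partial f + \nabla g$ (which holds here because $g$ has full domain and is continuous), translates via \cite[Example~22.3(iv)]{bauschke-book} into the $\rho$-strong monotonicity of $A+B$. Moreover, the Fermat rule gives $\argmin(f+g) = \zer(\partial(f+g)) = \zer(A+B)$, so the unique minimizer $x^*$ of $f+g$ is exactly the unique zero of $A+B$ required by Theorem~\ref{first-order-a+b}.

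Next I would identify the two dynamical systems. Since $J_{\eta \partial f} = \prox_{\eta f}$ for every $\eta > 0$, the right-hand side of \eqref{f-o-dyn-syst-fb} with $A = \partial f$, $B = \nabla g$ is precisely
\[
\lambda(t)\bigl[\prox\nolimits_{\eta f}(x(t) - \eta \nabla g(x(t))) - x(t)\bigr],
\]
so a function $x : [0,+\infty) \to {\cal H}$ is a strong global solution of \eqref{f-o-dyn-syst-fb-opt} if and only if it is a strong global solution of the corresponding instance of \eqref{f-o-dyn-syst-fb}. In particular, existence and uniqueness of the strong global solution are inherited from the general statement.

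Finally, since the parameters $\lambda$, $\alpha$, $\eta$ are assumed to satisfy exactly the same bounds as in Theorem~\ref{first-order-a+b}, I would directly invoke that theorem to conclude
\[
\|x(t) - x^*\|^2 \leq \|x_0 - x^*\|^2 \exp(-Ct)
\]
for every $t \in [0,+\infty)$, with the same constant $C = \frac{2\rho\underline{\lambda} - \alpha/\beta^2}{2\rho + 1/\eta} > 0$. There is no real obstacle here; the only point worth double-checking is the chain rule for subdifferentials $\partial(f+g) = \partial f + \nabla g$, which is automatic under our hypotheses on $g$, so the proof reduces to a clean application of the previous theorem.
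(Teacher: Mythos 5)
Your proposal is correct and follows exactly the paper's approach: the paper presents this theorem as a direct consequence of Theorem \ref{first-order-a+b}, obtained by setting $A=\partial f$ and $B=\nabla g$, using $J_{\eta\partial f}=\prox_{\eta f}$ and the $\rho$-strong monotonicity of $\partial(f+g)=\partial f+\nabla g$, which is precisely the chain of identifications you verify.
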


In the last part of this section we approach the convex minimization problem \eqref{gopt} via the first order dynamical system
\begin{equation}\label{f-o-nabla}\left\{
\begin{array}{ll}
\dot x(t) + \lambda(t)\nabla g(x(t))=0\\
x(0)=x_0.
\end{array}\right.\end{equation}
The following result quantifies the rate of convergence of $g$  to its minimum value along the trajectories generated by \eqref{f-o-nabla}.

\begin{theorem}\label{first-order-nabla} Let $g:{\cal H}\rightarrow \R$ be a $\rho$-strongly convex and 
(Fr\'echet) differentiable function with $\frac{1}{\beta}$-Lipschitz continuous gradient for $\rho > 0$ and $\beta > 0$ and $x^*$ be the unique minimizer of $g$ over ${\cal H}$.
Let $\lambda:[0,+\infty)\rightarrow[0,+\infty)$ be a Lebesgue measurable function such that there exists a real number $\ul\lambda\in\R$ fulfilling $$0<\ul\lambda\leq\inf_{t\geq 0}\lambda(t).$$
Chose $\alpha>0$ such that 
$$\alpha \leq 2\ul\lambda\beta\rho^2.$$
If $x_0\in {\cal H}$ and $x:[0,+\infty)\rightarrow{\cal H}$ is the unique strong global solution of the dynamical system \eqref{f-o-nabla}, then
for every $t \in [0, +\infty)$ one has
$$0 \leq \frac{\rho}{2} \|x(t) - x^*\|^2 \leq g(x(t))-g(x^*)\leq (g(x_0)-g(x^*))\exp(-\alpha t)\leq\frac{1}{2\beta}\|x_0-x^*\|^2\exp(-\alpha t).$$  
\end{theorem}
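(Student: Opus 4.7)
The plan is to prove the chain of inequalities from left to right by treating the two outer estimates as standard convex-analytic facts and reserving the middle (exponential decay) estimate for a Grönwall argument along the trajectory, in the spirit of the proof of Theorem \ref{first-order-a+b}.

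The leftmost inequality is immediate from $\rho$-strong convexity together with $\nabla g(x^*)=0$: the quadratic lower bound $g(y)\geq g(x^*)+\langle\nabla g(x^*),y-x^*\rangle+\frac{\rho}{2}\|y-x^*\|^2$ at $y=x(t)$ gives $\frac{\rho}{2}\|x(t)-x^*\|^2\leq g(x(t))-g(x^*)$. Symmetrically, the descent lemma for a function with $\frac{1}{\beta}$-Lipschitz gradient yields $g(y)-g(x^*)\leq\frac{1}{2\beta}\|y-x^*\|^2$; evaluated at $y=x_0$ this gives the rightmost inequality.

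For the middle inequality, set $h(t)=g(x(t))-g(x^*)\geq 0$. Since $x$ is locally absolutely continuous and $\nabla g$ is Lipschitz, the chain rule gives, for almost every $t\geq 0$,
\[
\dot h(t)=\langle\nabla g(x(t)),\dot x(t)\rangle=-\lambda(t)\,\|\nabla g(x(t))\|^2.
\]
I will bound this by combining two ingredients. First, $\rho$-strong monotonicity of $\nabla g$ together with $\nabla g(x^*)=0$ and Cauchy--Schwarz yield
\[
\rho\|x(t)-x^*\|^2\leq\langle\nabla g(x(t)),x(t)-x^*\rangle\leq\|\nabla g(x(t))\|\,\|x(t)-x^*\|,
\]
so $\|\nabla g(x(t))\|^2\geq\rho^2\|x(t)-x^*\|^2$. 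Second, the descent lemma applied along the trajectory gives $h(t)\leq\frac{1}{2\beta}\|x(t)-x^*\|^2$, i.e.\ $\|x(t)-x^*\|^2\geq 2\beta\,h(t)$. Chaining these two estimates produces the Polyak--\L{}ojasiewicz-type bound $\|\nabla g(x(t))\|^2\geq 2\beta\rho^2\,h(t)$, and consequently
\[
\dot h(t)\leq -2\beta\rho^2\lambda(t)\,h(t)\leq -2\beta\rho^2\,\ul\lambda\,h(t)\leq -\alpha\,h(t),
\]
using the choice of $\alpha$ in the last step.

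Multiplying by $\exp(\alpha t)$ and integrating on $[0,T]$ yields $h(T)\leq h(0)\exp(-\alpha T)$, which is the middle inequality. The only nontrivial step is the combined lower bound $\|\nabla g(x(t))\|^2\geq 2\beta\rho^2\,h(t)$, obtained by marrying the strong-monotonicity lower bound on $\|\nabla g\|$ with the Lipschitz-gradient upper bound on $h$; the rest is differentiation along the flow and a one-line Grönwall integration, exactly parallel in spirit to the argument used for Theorem \ref{first-order-a+b}.
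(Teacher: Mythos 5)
Your proposal is correct and follows essentially the same route as the paper's proof: both outer inequalities come from strong convexity and the descent lemma, and the middle one from combining $\rho\|x(t)-x^*\|\leq\|\nabla g(x(t))\|$ with $g(x(t))-g(x^*)\leq\frac{1}{2\beta}\|x(t)-x^*\|^2$ to get $\dot h(t)+\alpha h(t)\leq 0$ and then integrating. Your isolation of the Polyak--\L{}ojasiewicz-type bound $\|\nabla g(x(t))\|^2\geq 2\beta\rho^2\,h(t)$ is only a cosmetic reorganization of the same two estimates the paper chains together.
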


\begin{proof} The second inequality is a consequence of the strong convexity of the function $g$. 
Further, we recall that according to the descent lemma, which is valid for an arbitrary differentiable function with Lipschitz continuous gradient (see \cite[Lemma 1.2.3]{nes}), we have
$$g(u)\leq g(v)+\langle \nabla g(v),u-v\rangle+\frac{1}{2\beta}\|u-v\|^2 \ \forall u,v \in {\cal H}.$$
By setting in the previous relation, for every $t \in [0, +\infty)$, $u:= x(t)$ and $v:=x^*$ and by taking into account that $\nabla g(x^*) = 0$, we obtain
\begin{equation}\label{desc-l}g(x(t))-g(x^*)\leq \frac{1}{2\beta}\|x(t)-x^*\|^2.\end{equation}
From here, the last inequality in the conclusion follows automatically.

Using the strong convexity of $g$ we have for every $t \in [0, +\infty)$ that
$$\rho\|x(t)-x^*\|^2\leq \<x(t)-x^*,\nabla g(x(t))\>\leq\|x(t)-x^*\| \|\nabla g(x(t))\|,$$
thus 
\begin{equation}\label{ineq-g-str-conv} \rho\|x(t)-x^*\|\leq\|\nabla g(x(t))\|.\end{equation}
Finally, from the first equation in \eqref{f-o-nabla}, \eqref{desc-l}, \eqref{ineq-g-str-conv} and using the way in which $\alpha$ was chosen, we obtain for almost every $t \in [0,+\infty)$
\begin{align*} \frac{d}{dt}\big(g(x(t))-g(x^*)\big)+\alpha(g(x(t))-g(x^*))= & \<\dot x(t) , \nabla g(x(t))\>+\alpha(g(x(t))-g(x^*))\\
\leq &  -\lambda(t)\|\nabla g(x(t))\|^2+\frac{\alpha}{2\beta}\|x(t)-x^*\|^2\\
\leq & \left(-\lambda(t)+\frac{\alpha}{2\beta\rho^2}\right)\|\nabla g(x(t))\|^2\\
\leq & 0.
\end{align*}
By multiplying this inequality with $\exp(\alpha t)$ and integrating from $0$ to $T$, where $T\geq 0$, one easily obtains also the third inequality.  
\end{proof}

\section{Converges rates for second order dynamical systems}\label{sec3}

The starting point of the investigations we go through in this section is again the monotone inclusion problem \eqref{monpr}, however, this time approached via the second order dynamical system
\eqref{s-o-dyn-syst-fb}. We say that $x:[0,+\infty)\rightarrow{\cal H}$ is a \emph{strong global solution} of \eqref{s-o-dyn-syst-fb}, if the following properties are satisfied:

(i) $x, \dot x:[0,+\infty)\rightarrow {\cal H}$ are locally absolutely continuous; 

(ii) For almost every $t\in[0,+\infty)$ it holds 
$$\ddot x(t) + \gamma(t) \dot x(t) + \lambda(t)\left[x(t)-J_{\eta A}\Big(x(t)-\eta B(x(t))\Big)\right]=0;$$

(iii) $x(0)=u_0, \dot x(0)=v_0$. 

The existence and uniqueness of strong global solutions of the system \eqref{s-o-dyn-syst-fb} follow from the Cauchy-Lipschitz-Picard Theorem applied in a product space (see also \cite{b-c-dyn-sec-ord}). 

The following result will be useful when deriving the convergence rates.

\begin{lemma}\label{l-sec-ord} Let $h,\gamma,b_1,b_2,b_3,u:[0,+\infty)\rightarrow\R$ be given functions such that  $h,\gamma,b_2,u$ are locally absolutely continuous and    
$\dot h$ is locally absolutely continuous, too. Assume that 
$$h(t),b_2(t),u(t)\geq 0 \ \forall t \in [0,+\infty)$$
and that there exists $\ul\gamma>1$ such that 
$$\gamma(t)\geq\ul\gamma>1 \ \forall t \in [0,+\infty).$$
Further, assume that for almost every $t\in[0,+\infty)$ one has 
\begin{align}\label{hyp1}
\gamma(t)+\dot\gamma(t)\leq b_1(t)+1,
\end{align}
\begin{align}\label{hyp2}
b_2(t)+\dot b_2(t)\leq b_3(t)
\end{align}
and
\begin{equation}\label{ineq-h-u}\ddot h(t)+\gamma(t)\dot h(t)+b_1(t) h(t)+b_2(t)\dot u(t)+b_3(t)u(t)\leq 0.\end{equation}
Then there exists $M>0$ such that the following statements hold: 
\begin{enumerate}
 \item[(i)] if $1<\ul\gamma<2$, then for almost every $t\in[0,+\infty)$ 
 $$0\leq h(t)\leq \left(h(0)+\frac{M}{2-\ul\gamma}\right)\exp(-(\ul\gamma-1)t);$$
 \item[(ii)] if $2<\ul\gamma$, then for almost every $t\in[0,+\infty)$ 
 $$0\leq h(t)\leq h(0)\exp(-(\ul\gamma-1)t)+\frac{M}{\ul\gamma-2}\exp(-t)\leq \left(h(0)+\frac{M}{\ul\gamma-2}\right)\exp(-t);$$
 \item[(iii)] if $\ul\gamma=2$, then for almost every $t\in[0,+\infty)$ $$0\leq h(t)\leq \left(h(0)+Mt\right)\exp(-t).$$
\end{enumerate} 
\end{lemma}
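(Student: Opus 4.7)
My plan is to reduce the second-order differential inequality \eqref{ineq-h-u} to a first-order linear inequality with an exponentially decaying right-hand side, and then to integrate it explicitly in each of the three regimes for $\ul\gamma$.

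First, I would use the sign information $h(t),u(t)\geq 0$ together with \eqref{hyp1} and \eqref{hyp2} to replace $b_1(t)h(t)$ by the smaller quantity $(\gamma(t)+\dot\gamma(t)-1)h(t)$ and $b_3(t)u(t)$ by $(b_2(t)+\dot b_2(t))u(t)$ in \eqref{ineq-h-u}; since both replacements only diminish the left-hand side, the inequality $\leq 0$ is preserved. A direct computation then recognizes the resulting expression as a total derivative, yielding
\begin{equation*}
\frac{d}{dt}\bigl(\dot h(t)+\gamma(t)h(t)+b_2(t)u(t)\bigr)+(\gamma(t)-1)h(t)+b_2(t)u(t)\leq 0.
\end{equation*}

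The key step is to introduce the Lyapunov-type quantity $\phi(t):=\dot h(t)+\gamma(t)h(t)+b_2(t)u(t)$ and to observe the algebraic identity
\begin{equation*}
(\gamma(t)-1)h(t)+b_2(t)u(t)=\phi(t)-\bigl(\dot h(t)+h(t)\bigr),
\end{equation*}
so that the previous inequality becomes $\dot\phi+\phi\leq \dot h+h$, or equivalently $\frac{d}{dt}(e^t\phi)\leq \frac{d}{dt}(e^t h)$. Integrating from $0$ to $t$ and rewriting $\phi-h$ gives
\begin{equation*}
\dot h(t)+(\gamma(t)-1)h(t)+b_2(t)u(t)\leq (\phi(0)-h(0))\,e^{-t}.
\end{equation*}
Choosing any $M>0$ with $M\geq \phi(0)-h(0)$, dropping the nonnegative term $b_2(t)u(t)$, and using $\gamma(t)-1\geq \ul\gamma-1>0$ together with $h(t)\geq 0$, I arrive at the first-order linear differential inequality
\begin{equation*}
\dot h(t)+(\ul\gamma-1)h(t)\leq M\,e^{-t}.
\end{equation*}

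Finally, I would multiply by the integrating factor $e^{(\ul\gamma-1)t}$ and integrate from $0$ to $t$; everything then reduces to estimating $\int_0^t e^{(\ul\gamma-2)s}\,ds$, whose value splits into the three regimes $\ul\gamma<2$, $\ul\gamma>2$, and $\ul\gamma=2$, producing respectively bounds of the form $\tfrac{1}{2-\ul\gamma}$, $\tfrac{1}{\ul\gamma-2}e^{(\ul\gamma-2)t}$, and $t$. Dividing through by $e^{(\ul\gamma-1)t}$ then yields the three conclusions (i), (ii), (iii), where in case (ii) the elementary bound $e^{-(\ul\gamma-1)t}\leq e^{-t}$ produces the cleaner final form.

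The only nonroutine ingredient is the discovery of the Lyapunov function $\phi=\dot h+\gamma h+b_2 u$ together with the identity $(\gamma-1)h+b_2 u=\phi-(\dot h+h)$; this is exactly what decouples the second-order dynamics into an autonomous first-order exponential comparison. After this reduction the three cases are routine ODE integration.
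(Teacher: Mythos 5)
Your proposal is correct and follows essentially the same route as the paper: both arguments show that $e^{t}\bigl(\dot h(t)+(\gamma(t)-1)h(t)+b_2(t)u(t)\bigr)$ is nonincreasing (you via the comparison $\frac{d}{dt}(e^{t}\phi)\leq\frac{d}{dt}(e^{t}h)$, the paper by multiplying \eqref{ineq-h-u} by $e^{t}$ and absorbing the hypotheses into total derivatives), and both then reduce to the first-order inequality $\dot h(t)+(\ul\gamma-1)h(t)\leq M e^{-t}$ and integrate with the factor $e^{(\ul\gamma-1)t}$ to split into the three cases.
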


\begin{proof} We multiply the inequality \eqref{ineq-h-u} with $\exp(t)$ and use the identities 
\begin{align*}
\exp(t)\ddot h(t) = & \frac{d}{dt}\big(\exp(t)\dot h(t)\big)-\exp(t)\dot h(t)\\
\exp(t)\dot u(t) = & \frac{d}{dt}(\exp(t)u(t))-\exp(t)u(t)\\
\exp(t)\dot h(t) = & \frac{d}{dt}\big(\exp(t) h(t)\big)-\exp(t) h(t)
\end{align*}
in order to derive for almost every $t\in[0,+\infty)$ the inequality
\begin{align*}& \frac{d}{dt}\big(\exp(t)\dot h(t)\big)+(\gamma(t)-1)\frac{d}{dt}\big(\exp(t)h(t)\big) + \\
&\exp(t)h(t)(b_1(t)+1-\gamma(t))+ b_2(t)\frac{d}{dt}\big(\exp(t)u(t)\big) +(b_3(t)-b_2(t))\exp(t)u(t)\leq 0.
\end{align*}
By using also 
\begin{align*}
(\gamma(t)-1)\frac{d}{dt}\big(\exp(t)h(t)\big) & =\frac{d}{dt}\Big((\gamma(t)-1)\exp(t)h(t)\Big)-\dot\gamma(t)\exp(t)h(t)\\
b_2(t)\frac{d}{dt}\big(\exp(t)u(t)\big) & =\frac{d}{dt}\big(b_2(t)\exp(t)u(t)\big)-\dot b_2(t)\exp(t)u(t)
\end{align*}
we obtain for almost every $t\in[0,+\infty)$
\begin{align*}& \frac{d}{dt}\big(\exp(t)\dot h(t)\big)+\frac{d}{dt}\Big((\gamma(t)-1)\exp(t)h(t)\Big)+\frac{d}{dt}\big(b_2(t)\exp(t)u(t)\big)+\\
&\big(b_1(t)+1-\gamma(t)-\dot\gamma(t)\big)\exp(t)h(t)+\big(b_3(t)-b_2(t)-\dot b_2(t)\big)\exp(t)u(t)\leq 0.
\end{align*}
The hypotheses regarding the parameters involved imply that the function 
$$t\rightarrow\exp(t)\dot h(t)+(\gamma(t)-1)\exp(t)h(t)+b_2(t)\exp(t)u(t)$$ is 
monotonically decreasing, hence there exists $M>0$ such that $$\exp(t)\dot h(t)+(\gamma(t)-1)\exp(t)h(t)+b_2(t)\exp(t)u(t)\leq M.$$

Since $u(t),b_2(t)\geq 0$ we get $$\dot h(t)+(\gamma(t)-1)h(t)\leq M\exp(-t),$$
hence $$\dot h(t)+(\ul\gamma-1)h(t)\leq M\exp(-t)$$
for every $t \in [0,+\infty)$. This implies that 
$$\frac{d}{dt}\big(\exp((\ul\gamma-1)t)h(t)\big)\leq M\exp((\ul\gamma-2)t),$$
for every $t \in [0,+\infty)$, from which the conclusion follows easily by integration. 
\end{proof}

We come now to the first main result of this section.

\begin{theorem}\label{sec-ord-a+b} Let $A:{\cal H}\rightrightarrows {\cal H}$ be a maximally monotone operator, $B:{\cal H}\rightarrow {\cal H}$ a monotone and $\frac{1}{\beta}$-Lipschitz continuous operator for $\beta > 0$ 
such that $A+B$ is $\rho$-strongly monotone for $\rho>0$ and $x^*$ be the unique point in $\zer(A+B)$. Chose $\alpha,\delta\in(0,1)$ and $\eta > 0$ such that $\delta\beta\rho<1$ and 
$\frac{1}{\eta}=\left(\frac{1}{\beta}+\frac{1}{4\rho\beta^2\alpha}\right)\frac{1}{\delta} - \rho >0$. 

Let  $\lambda:[0,+\infty)\rightarrow[0,+\infty)$ be a locally absolutely continuous 
function fulfilling for every $t \in [0,+\infty)$
\begin{align*}\theta(t):= & \ \lambda(t)\frac{\delta}{1-\delta}\frac{\rho+\left(\frac{1}{\beta}+\frac{1}{4\rho\beta^2\alpha}\right)\frac{1}{\delta}}{\frac{1}{\beta}+\frac{1}{4\rho\beta^2\alpha}}\\
\leq & \ \lambda(t)\frac{2\rho(1-\alpha)}{\rho+\left(\frac{1}{\beta}+\frac{1}{4\rho\beta^2\alpha}\right)\frac{1}{\delta}}
+\lambda^2(t)\left(\frac{2\rho(1-\alpha)}{\rho+\left(\frac{1}{\beta}+\frac{1}{4\rho\beta^2\alpha}\right)\frac{1}{\delta}}\right)^2
\end{align*}
and such that there exists a real number $\ul\lambda$ with the property that
$$0<\ul\lambda\leq\inf_{t\geq0}\lambda(t)$$ and 
$$2<\theta:=\ul\lambda\frac{\delta}{1-\delta}\frac{\rho+\left(\frac{1}{\beta}+\frac{1}{4\rho\beta^2\alpha}\right)\frac{1}{\delta}}{\frac{1}{\beta}+\frac{1}{4\rho\beta^2\alpha}}.$$

Further, let $\gamma:[0,+\infty)\rightarrow[0,+\infty)$ be a locally absolutely continuous 
function fulfilling 
\begin{equation}\label{th6cond1}
\frac{1+\sqrt{1+4\theta(t)}}{2}\leq\gamma(t)\leq 1+\lambda(t)\frac{2\rho(1-\alpha)}{\rho+\left(\frac{1}{\beta}+\frac{1}{4\rho\beta^2\alpha}\right)\frac{1}{\delta}} \ \mbox{for every} \ t \in [0,+\infty)
\end{equation}
and
\begin{equation}\label{th6cond2}
\dot\gamma(t)\leq 0 \mbox{ and } \frac{d}{dt}\left(\frac{\gamma(t)}{\lambda(t)}\right)\leq 0 \ \mbox{for almost every} \ t \in [0,+\infty).
\end{equation}
Let $u_0,v_0\in {\cal H}$  and $x:[0,+\infty)\rightarrow{\cal H}$ be the unique strong global solution of the dynamical system \eqref{s-o-dyn-syst-fb}.

Then $\gamma(t)\geq\ul\gamma:=\frac{1+\sqrt{1+4\theta}}{2}>2$ for every $t \in [0,+\infty)$ and there exists $M>0$ such that for every $t\in[0,+\infty)$
\begin{align*}
0\leq \|x(t)-x^*\|^2 & \leq \|u_0-x^*\|^2\exp(-(\ul\gamma-1)t)+\frac{M}{\ul\gamma-2}\exp(-t)\\
 & \leq \left(\|u_0-x^*\|^2+\frac{M}{\ul\gamma-2}\right)\exp(-t).
\end{align*}
\end{theorem}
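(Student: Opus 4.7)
The plan is to reduce the problem to Lemma \ref{l-sec-ord}(ii) by deriving, for $h(t) := \tfrac{1}{2}\|x(t)-x^*\|^2$, a differential inequality of the form \eqref{ineq-h-u}. First I would introduce $y(t) := J_{\eta A}(x(t)-\eta B(x(t)))$, so that \eqref{s-o-dyn-syst-fb} rewrites as
$$y(t) - x(t) \;=\; \frac{1}{\lambda(t)}\bigl(\ddot x(t) + \gamma(t)\dot x(t)\bigr).$$
The defining property of the resolvent gives
$$\tfrac{1}{\eta}(x(t)-y(t)) \,+\, B(y(t)) - B(x(t)) \;\in\; (A+B)(y(t)),$$
and comparing with $0\in(A+B)(x^*)$ using the $\rho$-strong monotonicity of $A+B$ yields
$$\rho\|y(t)-x^*\|^2 \;\le\; \Bigl\langle y(t)-x^*,\; \tfrac{1}{\eta}(x(t)-y(t)) + B(y(t))-B(x(t))\Bigr\rangle.$$

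Second, I would expand both sides via the identities $\dot h(t) = \langle x(t)-x^*,\dot x(t)\rangle$ and $\ddot h(t) = \|\dot x(t)\|^2 + \langle x(t)-x^*, \ddot x(t)\rangle$, so that $\|y(t)-x^*\|^2$ and the inner product on the right become combinations of $h(t)$, $\dot h(t)$, $\ddot h(t)$, $\|\dot x(t)\|^2$ and $\|\ddot x(t)+\gamma(t)\dot x(t)\|^2$. The Lipschitz property of $B$ gives $\|B(y(t))-B(x(t))\|\le \tfrac{1}{\beta\lambda(t)}\|\ddot x(t)+\gamma(t)\dot x(t)\|$, and Young's inequality with parameter $\alpha$ (mirroring the first-order proof of Theorem \ref{first-order-a+b}, but now calibrated by $\delta$ via the constant $\tfrac{1}{\delta}(\tfrac{1}{\beta}+\tfrac{1}{4\rho\beta^2\alpha})$) controls the cross terms of type $\|y(t)-x^*\|\cdot\|\ddot x(t)+\gamma(t)\dot x(t)\|$. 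After collecting terms and multiplying by $\tfrac{\lambda(t)}{2\rho+1/\eta} = \tfrac{\lambda(t)}{\rho + (1/\beta+1/(4\rho\beta^2\alpha))/\delta}$, the target inequality
$$\ddot h(t) + \gamma(t)\dot h(t) + b_1(t) h(t) + b_2(t)\dot u(t) + b_3(t)u(t) \;\le\; 0$$
should emerge with
$$b_1(t) \;=\; \lambda(t)\,\frac{2\rho(1-\alpha)}{\rho + (1/\beta+1/(4\rho\beta^2\alpha))/\delta},$$
and with $u(t)\ge 0$ chosen as the natural nonnegative residual built from $\|\dot x(t)\|^2$ (together with coefficients $b_2,b_3$ proportional to $\gamma(t)/\lambda(t)$, which is why the second monotonicity condition in \eqref{th6cond2} shows up).

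Third, I would verify the hypotheses of Lemma \ref{l-sec-ord}. The condition $\gamma(t)+\dot\gamma(t)\le b_1(t)+1$ follows from $\dot\gamma(t)\le 0$ and the upper bound on $\gamma(t)$ in \eqref{th6cond1}. The condition $b_2(t)+\dot b_2(t)\le b_3(t)$ should correspond precisely to the monotonicity of $\gamma/\lambda$ in \eqref{th6cond2}, after identifying $b_2,b_3$. For the lower bound on $\gamma$, since $\theta>2$ the quadratic $s^2-s-\theta$ has positive root $\ul\gamma = \tfrac{1+\sqrt{1+4\theta}}{2}>2$, and the left inequality in \eqref{th6cond1} together with $\theta(t)\ge\theta$ gives $\gamma(t)^2-\gamma(t)\ge\theta$, hence $\gamma(t)\ge\ul\gamma>2$ for every $t$. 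Applying Lemma \ref{l-sec-ord}(ii) produces the stated exponential estimate.

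The main obstacle is the third paragraph: pinning down the exact choice of $u,b_2,b_3$ so that (a) the leftover quadratic terms in $\|\dot x(t)\|^2$ and $\|\ddot x(t)+\gamma(t)\dot x(t)\|^2$ are absorbed with nonnegative residue, and (b) the resulting constants recover \emph{both} inequalities defining $\theta(t)$ (the linear \emph{and} the $\lambda^2(t)$ term) so that $\gamma(t)\ge \tfrac{1+\sqrt{1+4\theta(t)}}{2}$ from \eqref{th6cond1} is precisely what is needed. This is a careful but mechanical bookkeeping exercise; once it is in place, the lemma closes the argument.
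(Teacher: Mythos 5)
Your proposal follows the paper's proof essentially step for step: the same resolvent inclusion for $y(t)=x(t)+\frac{1}{\lambda(t)}(\ddot x(t)+\gamma(t)\dot x(t))$, the same strong-monotonicity estimate combined with the Lipschitz bound and Young's inequality with parameter $\alpha$, the same reduction to \eqref{ineq-h-u} with $u=\|\dot x(\cdot)\|^2$, $b_1(t)=\lambda(t)\frac{2\rho(1-\alpha)}{2\rho+1/\eta}$ and $b_2,b_3$ built from $\gamma/\lambda$ and $\gamma^2/\lambda$, and the same appeal to Lemma \ref{l-sec-ord}(ii). The bookkeeping you flag as the remaining obstacle does close exactly as you predict (in particular, $b_3(t)\geq b_2(t)$ is equivalent to $\gamma^2(t)-\gamma(t)\geq\theta(t)$, i.e.\ the left inequality in \eqref{th6cond1}, while $\dot b_2(t)\leq 0$ is \eqref{th6cond2}), so the proposal is correct and takes the paper's route.
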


\begin{proof} From the definition of the resolvent we have for almost every $t \in [0, +\infty)$
\begin{align}\label{conseq-def-res}
& B\left(\frac{1}{\lambda(t)}\ddot x(t)+\frac{\gamma(t)}{\lambda(t)}\dot x(t)+x(t)\right)-B(x(t))
-\frac{1}{\eta\lambda(t)}\ddot x(t)-\frac{\gamma(t)}{\eta\lambda(t)}\dot x(t)\in \nonumber \\ 
& (A+B)\left(\frac{1}{\lambda(t)}\ddot x(t)+\frac{\gamma(t)}{\lambda(t)}\dot x(t)+x(t)\right).\end{align}
We combine this with $0\in (A+B)x^*$, the strong monotonicity of $A+B$, the Lipschitz continuity of $B$ and, by also using the Cauchy-Schwartz inequality, we get for almost every $t \in [0, +\infty)$
\begin{align*} & \frac{\rho}{\lambda^2(t)}\left\|\ddot x(t)+\gamma(t)\dot x(t)\right\|^2+\frac{2\rho}{\lambda(t)}\<x(t)-x^*,\ddot x(t)+\gamma(t)\dot x(t)\> 
+\rho\|x(t)-x^*\|^2\\
= & \rho\left\|\frac{1}{\lambda(t)}\ddot x(t)+\frac{\gamma(t)}{\lambda(t)}\dot x(t)+x(t)-x^*\right\|^2\\ 
\leq & \< \frac{1}{\lambda(t)}\ddot x(t)+\frac{\gamma(t)}{\lambda(t)}\dot x(t)+x(t)-x^*,
B\left(\frac{1}{\lambda(t)}\ddot x(t)+\frac{\gamma(t)}{\lambda(t)}\dot x(t)+x(t)\right)-B(x(t))\>\\
& -  \< \frac{1}{\lambda(t)}\ddot x(t)+\frac{\gamma(t)}{\lambda(t)}\dot x(t)+x(t)-x^*, \frac{1}{\eta\lambda(t)}\ddot x(t) + \frac{\gamma(t)}{\eta\lambda(t)}\dot x(t)\>\\
= & \frac{1}{\lambda(t)}\< \ddot x(t)+\gamma(t)\dot x(t), B\left(\frac{1}{\lambda(t)}\ddot x(t)+\frac{\gamma(t)}{\lambda(t)}\dot x(t)+x(t)\right)-B(x(t))\>\\
& + \< x(t)-x^*, B\left(\frac{1}{\lambda(t)}\ddot x(t)+\frac{\gamma(t)}{\lambda(t)}\dot x(t)+x(t)\right)-B(x(t))\>\\
& -\frac{1}{\eta\lambda^2(t)}\|\ddot x(t)+\gamma(t)\dot x(t)\|^2 -\frac{1}{\eta\lambda(t)}\<x(t)-x^*,\ddot x(t)+\gamma(t)\dot x(t)\>\\
\leq &  \frac{1}{\beta\lambda^2(t)}\|\ddot x(t)+\gamma(t)\dot x(t)\|^2-\frac{1}{\eta\lambda^2(t)}\|\ddot x(t)+\gamma(t)\dot x(t)\|^2\\
&+\frac{1}{4\rho\beta^2 \alpha \lambda^2(t)}\!\|\ddot x(t)+\gamma(t)\dot x(t)\|^2+\rho\alpha \|x(t)-x^*\|^2 -\!\frac{1}{\eta\lambda(t)}\<x(t)-x^*,\ddot x(t)+\gamma(t)\dot x(t)\>.
\end{align*}
Using again the notation $h(t)=\frac{1}{2}\|x(t)-x^*\|^2$, we have for almost every $t \in [0,+\infty)$
\begin{equation}\label{norm^2}\left\|\ddot x(t)+\gamma(t)\dot x(t)\right\|^2=\|\ddot x(t)\|^2+\gamma^2(t)\|\dot x(t)\|^2+\gamma(t)\frac{d}{dt}(\|\dot x(t)\|^2)\end{equation}
and $$\<x(t)-x^*,\ddot x(t)+\gamma(t)\dot x(t)\>=\ddot h(t)+\gamma(t) \dot h(t)-\|\dot x(t)\|^2.$$
Therefore, we obtain for almost every $t \in [0,+\infty)$
\begin{align*} & \ \left(\frac{\rho}{\lambda^2(t)}+\frac{1}{\eta\lambda^2(t)}-\frac{1}{\beta\lambda^2(t)}-\frac{1}{4\rho\beta^2 \alpha \lambda^2(t)}\right)\|\ddot x(t)\|^2\\
+ & \ \left[\gamma^2(t)\left(\frac{\rho}{\lambda^2(t)}+\frac{1}{\eta\lambda^2(t)}-\frac{1}{\beta\lambda^2(t)}-\frac{1}{4\rho\beta^2 \alpha \lambda^2(t)}\right)-\frac{2\rho}{\lambda(t)}-\frac{1}{\eta\lambda(t)}\right]\|\dot x(t)\|^2\\
+ & \ \gamma(t)\left(\frac{\rho}{\lambda^2(t)}+\frac{1}{\eta\lambda^2(t)}-\frac{1}{\beta\lambda^2(t)}-\frac{1}{4\rho\beta^2 \alpha \lambda^2(t)}\right)\frac{d}{dt}\left(\|\dot x(t)\|^2\right)\\
+ & \ \left(\frac{2\rho}{\lambda(t)}+\frac{1}{\eta\lambda(t)}\right)\ddot h(t)+ \gamma(t)\left(\frac{2\rho}{\lambda(t)}+\frac{1}{\eta\lambda(t)}\right)\dot h(t)
+ 2\rho(1-\alpha)h(t) \leq 0. 
\end{align*}
The hypotheses imply that 
$$\frac{\rho}{\lambda^2(t)}+\frac{1}{\eta\lambda^2(t)}-\frac{1}{\beta\lambda^2(t)}-\frac{1}{4\rho\beta^2 \alpha \lambda^2(t)}
= \frac{1}{\lambda^2(t)}\left(\rho+\frac{1}{\eta}-\frac{1}{\beta}-\frac{1}{4\rho\beta^2\alpha}\right)>0,$$
hence the first term in the left hand side of the above inequality can be 
neglected and we obtain for almost every $t \in [0,+\infty)$ that
\begin{equation}\ddot h(t)+\gamma(t)\dot h(t)+b_1(t) h(t)+b_2(t)\frac{d}{dt}(\|\dot x(t)\|^2)+b_3(t)\|\dot x(t)\|^2\leq 0,\end{equation}
where $$b_1(t):=\lambda(t)\frac{2\rho(1-\alpha)}{2\rho+\frac{1}{\eta}}>0$$
$$b_2(t):=\gamma(t)\frac{\frac{\rho}{\lambda^2(t)}+\frac{1}{\eta\lambda^2(t)}-\frac{1}{\beta\lambda^2(t)}-\frac{1}{4\rho\beta^2 \alpha \lambda^2(t)}}{\frac{2\rho}{\lambda(t)}+\frac{1}{\eta\lambda(t)}}
=\frac{\gamma(t)}{\lambda(t)}\frac{\rho+\frac{1}{\eta}-\frac{1}{\beta}-\frac{1}{4\rho\beta^2\alpha}}{2\rho+\frac{1}{\eta}}>0$$ and 
$$b_3(t):=\frac{\gamma^2(t)\left(\frac{\rho}{\lambda^2(t)}+\frac{1}{\eta\lambda^2(t)}-\frac{1}{\beta\lambda^2(t)}-\frac{1}{4\rho\beta^2 \alpha \lambda^2(t)}\right)-\frac{2\rho}{\lambda(t)}-\frac{1}{\eta\lambda(t)}}
{\frac{2\rho}{\lambda(t)}+\frac{1}{\eta\lambda(t)}}.$$
This shows that \eqref{ineq-h-u} in Lemma \ref{l-sec-ord} for $u := \|\dot x(\cdot)\|^2$ is fulfilled. In order to apply Lemma \ref{l-sec-ord}, we have only to prove that \eqref{hyp1} and \eqref{hyp2} are satisfied, as every other assumption in this 
statement is obviously guaranteed.

A simple calculation shows that 
\begin{equation}\label{b3-b2}b_3(t)\geq b_2(t) \Longleftrightarrow
\gamma^2(t)-\gamma(t)\geq \frac{\frac{2\rho}{\lambda(t)}+\frac{1}{\eta\lambda(t)}}
{\frac{\rho}{\lambda^2(t)}+\frac{1}{\eta\lambda^2(t)}-\frac{1}{\beta\lambda^2(t)}-\frac{1}{4\rho\beta^2 \alpha \lambda^2(t)}} = \theta(t),\end{equation}
which is true according to \eqref{th6cond1}, thus $b_3(t)\geq b_2(t)$ for every $t \in [0,+\infty)$. On the other hand (see \eqref{th6cond2}), 
$$\dot b_2(t)\leq 0$$
for almost every $t \in [0,+\infty)$, from which \eqref{hyp2} follows.

Further, again by using \eqref{th6cond1}, observe that 
$$1+b_1(t)=1+\lambda(t)\frac{2\rho(1-\alpha)}{\rho+\left(\frac{1}{\beta}+\frac{1}{4\rho\beta^2\alpha}\right)\frac{1}{\delta}}\geq\gamma(t)$$
for every $t \in [0,+\infty)$, which, combined with 
$$\dot \gamma(t) \leq 0$$
for almost every $t \in [0,+\infty)$, shows that \eqref{hyp1} is also fulfilled. 

The conclusion follows from Lemma \ref{l-sec-ord}(ii), by noticing that $\ul\gamma>2$, as $\theta > 2$.
\end{proof}

\begin{remark}\label{rem31}
One can notice that when $\dot \gamma(t) \leq 0$ for almost every $t \in [0,+\infty)$, the second assumption in \eqref{th6cond2} is fulfilled provided that $\dot \lambda(t) \geq 0$ for almost every $t \in [0,+\infty)$.

Further, we would like to point out that one can oviously chose $\lambda(t) = \ul \lambda$ and $\gamma(t) = \gamma$ for every $t \in [0,+\infty)$, where
\begin{align*}2 < \theta := & \ \ul \lambda \frac{\delta}{1-\delta}\frac{\rho+\left(\frac{1}{\beta}+\frac{1}{4\rho\beta^2\alpha}\right)\frac{1}{\delta}}{\frac{1}{\beta}+\frac{1}{4\rho\beta^2\alpha}}\\
\leq & \ \ul \lambda \frac{2\rho(1-\alpha)}{\rho+\left(\frac{1}{\beta}+\frac{1}{4\rho\beta^2\alpha}\right)\frac{1}{\delta}}
+\ul \lambda^2 \left(\frac{2\rho(1-\alpha)}{\rho+\left(\frac{1}{\beta}+\frac{1}{4\rho\beta^2\alpha}\right)\frac{1}{\delta}}\right)^2
\end{align*}
and \begin{equation*}
\frac{1+\sqrt{1+4\theta}}{2}\leq\gamma \leq 1+\ul \lambda \frac{2\rho(1-\alpha)}{\rho+\left(\frac{1}{\beta}+\frac{1}{4\rho\beta^2\alpha}\right)\frac{1}{\delta}}.
\end{equation*}
\end{remark}

When considering the convex optimization problem \eqref{convopt}, the second order dynamical system \eqref{s-o-dyn-syst-fb} written for $A=\partial f$ and $B= \nabla g$ becomes \begin{equation}\label{s-o-dyn-syst-fb-opt}\left\{
\begin{array}{ll}
\ddot x(t) + \gamma(t) \dot x(t) + \lambda(t)\left[x(t)-\prox_{\eta f}\Big(x(t)-\eta \nabla g(x(t))\Big)\right]=0\\
x(0)=u_0, \dot x(0)=v_0.
\end{array}\right.\end{equation}
Theorem \ref{sec-ord-a+b} gives rise to the following result.

\begin{theorem}\label{sec-ord-opt} Let $f:{\cal H}\rightarrow \R\cup\{+\infty\}$ be a proper, convex 
and lower semicontinuous function, $g:{\cal H}\rightarrow \R$ be a convex and (Fr\'{e}chet) differentiable function with $\frac{1}{\beta}$-Lipschitz continuous gradient for $\beta > 0$ such that 
$f+g$ is $\rho$-strongly convex for $\rho>0$ and  $x^*$ be the unique minimizer of $f+g$ over ${\cal H}$. Chose $\alpha,\delta\in(0,1)$ and $\eta > 0$ such that $\delta\beta\rho<1$ and 
$\frac{1}{\eta}=\left(\frac{1}{\beta}+\frac{1}{4\rho\beta^2\alpha}\right)\frac{1}{\delta} - \rho >0$. 

Let  $\lambda:[0,+\infty)\rightarrow[0,+\infty)$ be a locally absolutely continuous 
function fulfilling for every $t \in [0,+\infty)$
\begin{align*}\theta(t):= & \ \lambda(t)\frac{\delta}{1-\delta}\frac{\rho+\left(\frac{1}{\beta}+\frac{1}{4\rho\beta^2\alpha}\right)\frac{1}{\delta}}{\frac{1}{\beta}+\frac{1}{4\rho\beta^2\alpha}}\\
\leq & \ \lambda(t)\frac{2\rho(1-\alpha)}{\rho+\left(\frac{1}{\beta}+\frac{1}{4\rho\beta^2\alpha}\right)\frac{1}{\delta}}
+\lambda^2(t)\left(\frac{2\rho(1-\alpha)}{\rho+\left(\frac{1}{\beta}+\frac{1}{4\rho\beta^2\alpha}\right)\frac{1}{\delta}}\right)^2
\end{align*}
and such that there exists a real number $\ul\lambda$ with the property that
$$0<\ul\lambda\leq\inf_{t\geq0}\lambda(t)$$ and 
$$2<\theta:=\ul\lambda\frac{\delta}{1-\delta}\frac{\rho+\left(\frac{1}{\beta}+\frac{1}{4\rho\beta^2\alpha}\right)\frac{1}{\delta}}{\frac{1}{\beta}+\frac{1}{4\rho\beta^2\alpha}}.$$

Further, let $\gamma:[0,+\infty)\rightarrow[0,+\infty)$ be a locally absolutely continuous 
function fulfilling \eqref{th6cond1} and \eqref{th6cond2}, $u_0,v_0\in {\cal H}$  and $x:[0,+\infty)\rightarrow{\cal H}$ be the unique strong global solution of the dynamical system \eqref{s-o-dyn-syst-fb-opt}.

Then $\gamma(t)\geq\ul\gamma:=\frac{1+\sqrt{1+4\theta}}{2}>2$ for every $t \in [0,+\infty)$ and there exists $M>0$ such that for every $t\in[0,+\infty)$
\begin{align*}
0\leq \|x(t)-x^*\|^2 & \leq \|u_0-x^*\|^2\exp(-(\ul\gamma-1)t)+\frac{M}{\ul\gamma-2}\exp(-t)\\
 & \leq \left(\|u_0-x^*\|^2+\frac{M}{\ul\gamma-2}\right)\exp(-t).
\end{align*}
\end{theorem}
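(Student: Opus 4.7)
The plan is to derive Theorem~\ref{sec-ord-opt} as a direct corollary of Theorem~\ref{sec-ord-a+b} by making the identifications $A := \partial f$ and $B := \nabla g$, and then verifying that all hypotheses of Theorem~\ref{sec-ord-a+b} are inherited from the convex-optimization assumptions.

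First I would check the operator-theoretic side. Since $f$ is proper, convex and lower semicontinuous, $\partial f$ is maximally monotone (a classical result of Rockafellar, as mentioned after \eqref{convopt}). Since $g$ is convex and (Fr\'echet) differentiable with $\frac{1}{\beta}$-Lipschitz continuous gradient, $\nabla g$ is monotone and $\frac{1}{\beta}$-Lipschitz continuous, so $B = \nabla g$ satisfies the hypotheses imposed on $B$ in Theorem~\ref{sec-ord-a+b}. Since $\nabla g$ has full domain, $\partial f + \nabla g$ is maximally monotone, and the $\rho$-strong convexity of $f+g$ translates into $\rho$-strong monotonicity of $\partial(f+g) = \partial f + \nabla g$ (see \cite[Example~22.3(iv)]{bauschke-book}, as cited just before the statement of Theorem~\ref{first-order-a+b-opt}). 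Thus $\zer(\partial f + \nabla g)$ is a singleton and, because of Fermat's rule for the convex sum, this singleton coincides with $\{x^*\}$, the unique minimizer of $f+g$.

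Next I would identify the two dynamical systems. Using $J_{\eta \partial f} = \prox_{\eta f}$ (recorded just before \eqref{f-o-dyn-syst-fb-opt}), the second order system \eqref{s-o-dyn-syst-fb} specialized to $A = \partial f$ and $B = \nabla g$ coincides literally with \eqref{s-o-dyn-syst-fb-opt}. The existence and uniqueness of a strong global solution is therefore the one already guaranteed for \eqref{s-o-dyn-syst-fb}.

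Finally, since the constants $\alpha, \delta, \eta, \rho, \beta$ and the functions $\lambda, \gamma$ satisfy exactly the conditions required in the statement of Theorem~\ref{sec-ord-a+b}, I would simply apply that theorem to the solution $x$ of \eqref{s-o-dyn-syst-fb-opt}. This yields $\gamma(t)\geq \ul\gamma = \frac{1+\sqrt{1+4\theta}}{2}>2$ for every $t \in [0,+\infty)$ together with the existence of $M > 0$ such that
\begin{align*}
0 \leq \|x(t)-x^*\|^2 & \leq \|u_0-x^*\|^2 \exp(-(\ul\gamma-1)t) + \frac{M}{\ul\gamma-2}\exp(-t) \\
& \leq \left(\|u_0-x^*\|^2 + \frac{M}{\ul\gamma-2}\right)\exp(-t)
\end{align*}
for every $t\in[0,+\infty)$, which is exactly the conclusion. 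There is essentially no obstacle here; the only thing to double-check is that the translation of strong convexity to strong monotonicity of the subdifferential sum is used cleanly, and that the cited full-domain argument for maximal monotonicity of the sum $\partial f + \nabla g$ is applicable.
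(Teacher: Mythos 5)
Your proposal is correct and matches the paper's approach exactly: the paper presents Theorem \ref{sec-ord-opt} as a direct consequence of Theorem \ref{sec-ord-a+b} obtained by setting $A=\partial f$ and $B=\nabla g$, using $J_{\eta\partial f}=\prox_{\eta f}$ and the fact that $\rho$-strong convexity of $f+g$ gives $\rho$-strong monotonicity of $\partial f+\nabla g$. The verification details you supply (maximal monotonicity of $\partial f$, Lipschitz monotonicity of $\nabla g$, Fermat's rule identifying $\zer(\partial f+\nabla g)$ with $\argmin(f+g)$) are precisely the ingredients the paper records before Theorem \ref{first-order-a+b-opt}.
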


Finally, we approach the convex minimization problem \eqref{gopt} via the second order dynamical system
\begin{equation}\label{s-o-nabla}\left\{
\begin{array}{ll}
\ddot x(t) + \gamma(t) \dot x(t) + \lambda(t)\nabla g(x(t))=0\\
x(0)=u_0, \dot x(0)=v_0
\end{array}\right.\end{equation}
and provide an exponential rate of convergence of $g$ to its minimum value along the generated trajectories. 
The following result can be seen as the continuous counterpart of \cite[Theorem 4]{gfj}, where recently a linear rate of convergence for the values of $g$ on a sequence iteratively generated by an inertial-type algorithm has been obtained.  

\begin{theorem}\label{sec-ord-nabla} Let $g:{\cal H}\rightarrow \R$ be a $\rho$-strongly convex and 
(Fr\'echet) differentiable function with $\frac{1}{\beta}$-Lipschitz continuous gradient for $\rho > 0$ and $\beta > 0$ and $x^*$ be the unique minimizer of $g$ over ${\cal H}$.

Let $\alpha:[0,+\infty)\rightarrow\R$ be a Lebesgue measurable function such that there exists $\ul \alpha >1$ with
\begin{equation}\label{ulaplha}\inf_{t\geq 0}\alpha(t) \geq \max \left\{\ul \alpha, \frac{2}{\beta^2\rho^2} - 1\right\} \end{equation} 
and $\lambda:[0,+\infty)\rightarrow[0,+\infty)$ be a locally absolutely continuous function
fulfilling for every $t \in [0,+\infty)$
\begin{equation}\label{s-o-nabla-lambda}\frac{\alpha(t)}{\beta\rho^2}\leq \lambda(t)\leq \frac{\beta}{2}\big(\alpha(t)+\alpha^2(t)\big).\end{equation}
Further, let $\gamma:[0,+\infty)\rightarrow[0,+\infty)$ be a locally absolutely continuous 
function fulfilling   
\begin{equation}\label{s-o-nabla-gamma}\frac{1+\sqrt{1+8\frac{\lambda(t)}{\beta}}}{2}\leq\gamma(t)\leq 1+\alpha(t) \ \mbox{for every} \ t \in [0,+\infty) \end{equation} and \eqref{th6cond2}.

Let $u_0,v_0\in {\cal H}$ and $x:[0,+\infty)\rightarrow{\cal H}$ be the unique strong global solution of  the dynamical system \eqref{s-o-nabla}.

Then $\gamma(t)\geq\ul\gamma:=\frac{1+\sqrt{1+8\frac{\ul\alpha}{\beta^2\rho^2}}}{2}>2$ and there exists $M>0$ such that for every $t \in [0,+\infty)$ 
\begin{align*}
0\leq\frac{\rho}{2}\|x(t)-x^*\|^2 & \leq g(x(t))-g(x^*)\leq (g(u_0)-g(x^*))\exp(-(\ul\gamma-1)t)+\frac{M}{\ul\gamma-2}\exp(-t)\\
& \leq \! \left(\!g(u_0)-g(x^*)+\frac{M}{\ul\gamma-2}\!\right)\!\exp(-t) \! \leq \! \left(\! \frac{1}{2\beta}\|u_0-x^*\|^2+\frac{M}{\ul\gamma-2}\!\right)\!\exp(-t).
\end{align*}
\end{theorem}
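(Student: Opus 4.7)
The plan is to apply Lemma \ref{l-sec-ord}(ii) with the choice $h(t) := g(x(t))-g(x^*)$ and $u(t) := \|\dot x(t)\|^2$. Both quantities are nonnegative, and via Remark \ref{rem-abs-cont}(b) together with the regularity of a strong global solution one sees that $h$, $\dot h$, $u$, and the forthcoming coefficient $b_2(t) := \gamma(t)/(2\lambda(t))$ are all locally absolutely continuous, so the structural assumptions of the lemma are in place.

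The first task is to derive a differential inequality of the form \eqref{ineq-h-u}. Differentiating $h$ along the trajectory and replacing $\ddot x$ via \eqref{s-o-nabla} inside $\langle \nabla g(x),\ddot x\rangle$, together with the a.e.\ bound $\|\frac{d}{dt}\nabla g(x(t))\| \leq \frac{1}{\beta}\|\dot x(t)\|$ from Remark \ref{rem-abs-cont}(b) and Cauchy--Schwartz, one obtains
\begin{equation*}
\ddot h(t) + \gamma(t)\dot h(t) + \lambda(t)\|\nabla g(x(t))\|^2 \leq \tfrac{1}{\beta}u(t).
\end{equation*}
The key algebraic step is to split the middle term into two equal halves and estimate them differently. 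For the first half I would use $\|\nabla g(x(t))\|^2 \geq 2\beta\rho^2 h(t)$, which follows from the strong convexity bound $\rho\|x(t)-x^*\|\leq \|\nabla g(x(t))\|$ combined with the descent estimate $h(t) \leq \frac{1}{2\beta}\|x(t)-x^*\|^2$, exactly as in the proof of Theorem \ref{first-order-nabla}. For the second half I would use the ODE identity $\lambda\|\nabla g(x)\|^2 = \frac{1}{\lambda}\|\ddot x + \gamma\dot x\|^2$, expand via $2\langle\ddot x,\dot x\rangle = \dot u$, and discard the nonnegative term $\|\ddot x\|^2$. Combining, one arrives at \eqref{ineq-h-u} with $b_1 = \beta\rho^2\lambda$, $b_2 = \gamma/(2\lambda)$, and $b_3 = \gamma^2/(2\lambda) - 1/\beta$.

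Next I would verify the lemma's hypotheses for this choice. Hypothesis \eqref{hyp1} reads $\gamma+\dot\gamma \leq 1+\beta\rho^2\lambda$ and follows from $\gamma \leq 1+\alpha$ in \eqref{s-o-nabla-gamma}, $\alpha \leq \beta\rho^2\lambda$ in \eqref{s-o-nabla-lambda}, and $\dot\gamma \leq 0$ from \eqref{th6cond2}. Hypothesis \eqref{hyp2} reduces to $\dot b_2 \leq b_3-b_2$; the inequality $b_3 \geq b_2$ is equivalent to $\gamma^2-\gamma \geq 2\lambda/\beta$, which is exactly the left inequality in \eqref{s-o-nabla-gamma}, while $\dot b_2 = \frac{1}{2}\frac{d}{dt}(\gamma/\lambda) \leq 0$ is the second part of \eqref{th6cond2}. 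The lower bound $\gamma(t) \geq \ul\gamma := (1+\sqrt{1+8\ul\alpha/(\beta^2\rho^2)})/2$ then comes from plugging the estimate $\lambda(t) \geq \ul\alpha/(\beta\rho^2)$ into \eqref{s-o-nabla-gamma}, and the strict inequality $\ul\gamma > 2$ boils down to $\ul\alpha > \beta^2\rho^2$, which is ensured by $\ul\alpha > 1$ together with the standard consequence $\beta\rho \leq 1$ of $\nabla g$ being simultaneously $\rho$-strongly monotone and $\tfrac{1}{\beta}$-Lipschitz.

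With everything in place, Lemma \ref{l-sec-ord}(ii) yields the exponential decay of $h(t)$, and the remaining inequalities in the conclusion drop out by using $\exp(-(\ul\gamma-1)t) \leq \exp(-t)$, the descent lemma $g(u_0)-g(x^*) \leq \frac{1}{2\beta}\|u_0-x^*\|^2$ at the initial time, and the strong convexity bound $\frac{\rho}{2}\|x(t)-x^*\|^2 \leq g(x(t))-g(x^*)$. The step I expect to be the genuine obstacle is the specific $1/2$--$1/2$ splitting of $\lambda\|\nabla g(x)\|^2$: only with this weight does $b_3 \geq b_2$ collapse exactly to $\gamma^2-\gamma \geq 2\lambda/\beta$ and $b_1 \geq \alpha$ collapse exactly to $\lambda \geq \alpha/(\beta\rho^2)$, matching \eqref{s-o-nabla-gamma} and \eqref{s-o-nabla-lambda} on the nose, whereas a different split would enforce different and incompatible bounds on the parameters.
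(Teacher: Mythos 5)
Your proposal is correct and follows essentially the same route as the paper: the same differential inequality for $h(t)=g(x(t))-g(x^*)$, the same half-and-half splitting of $\lambda(t)\|\nabla g(x(t))\|^2$, and the same application of Lemma \ref{l-sec-ord}(ii) with $u=\|\dot x(\cdot)\|^2$, $b_2=\gamma/(2\lambda)$ and $b_3=\gamma^2/(2\lambda)-1/\beta$. The only cosmetic difference is that you take $b_1=\beta\rho^2\lambda(t)$ where the paper takes $b_1=\alpha(t)$ and verifies $\frac{\lambda(t)}{2}-\frac{\alpha(t)}{2\beta\rho^2}\geq 0$ separately; both reduce to exactly the condition \eqref{s-o-nabla-lambda}.
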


\begin{proof} One has for almost every $t \in [0,+\infty)$
$$\frac{d}{dt}g(x(t))=\<\dot x(t),\nabla g(x(t))\>$$ and (see Remark \ref{rem-abs-cont}(b))
$$\frac{d^2}{dt^2}g(x(t))=\<\ddot x(t),\nabla g(x(t))\>+
\<\dot x(t),\frac{d}{dt}\nabla g(x(t))\>\leq \<\ddot x(t),\nabla g(x(t))\>+
\frac{1}{\beta}\|\dot x(t)\|^2.$$
Further, by using  \eqref{desc-l}, \eqref{ineq-g-str-conv} and the first equation in \eqref{s-o-nabla}, we derive for almost every $t \in [0,+\infty)$
\begin{align*}& \ \frac{d^2}{dt^2}\big(g(x(t))-g(x^*)\big)+\gamma(t)\frac{d}{dt}\big(g(x(t))-g(x^*)\big)+\alpha(t)\big(g(x(t))-g(x^*)\big)\\
 \leq & \ -\lambda(t)\|\nabla g(x(t))\|^2+\frac{\alpha(t)}{2\beta\rho^2}\|\nabla g(x(t))\|^2+\frac{1}{\beta}\|\dot x(t)\|^2\\
 = & \ -\frac{1}{2\lambda(t)}\|\ddot x(t)+\gamma(t)\dot x(t)\|^2-\frac{\lambda(t)}{2}\|\nabla g(x(t))\|^2+\frac{\alpha(t)}{2\beta\rho^2}\|\nabla g(x(t))\|^2+\frac{1}{\beta}\|\dot x(t)\|^2.
\end{align*}
 
Taking into account \eqref{norm^2} we obtain  for almost every $t \in [0,+\infty)$
\begin{align*}& \ \frac{d^2}{dt^2}\big(g(x(t))-g(x^*)\big)+\gamma(t)\frac{d}{dt}\big(g(x(t))-g(x^*)\big)+\alpha(t)\big(g(x(t))-g(x^*)\big)\\
+ & \ \frac{\gamma(t)}{2\lambda(t)}\frac{d}{dt}(\|\dot x(t)\|^2)+\left(\frac{\gamma^2(t)}{2\lambda(t)}-\frac{1}{\beta}\right)\|\dot x(t)\|^2 \\
+ & \ \frac{1}{2\lambda(t)}\|\ddot x(t)\|^2+\left(\frac{\lambda(t)}{2}-\frac{\alpha(t)}{2\beta\rho^2}\right)\|\nabla g(x(t))\|^2 \leq 0.
\end{align*}
According to the choice of the parameters involved, we have 
$$\frac{\lambda(t)}{2}-\frac{\alpha(t)}{2\beta\rho^2}\geq 0,$$
thus, for almost every $t \in [0,+\infty)$, 
\begin{align*}& \ \frac{d^2}{dt^2}\big(g(x(t))-g(x^*)\big)+\gamma(t)\frac{d}{dt}\big(g(x(t))-g(x^*)\big)+\alpha(t)\big(g(x(t))-g(x^*)\big)\\
+ & \ \frac{\gamma(t)}{2\lambda(t)}\frac{d}{dt}(\|\dot x(t)\|^2)+\left(\frac{\gamma^2(t)}{2\lambda(t)}-\frac{1}{\beta}\right)\|\dot x(t)\|^2 \leq 0.
\end{align*}

This shows that \eqref{ineq-h-u} in Lemma \ref{l-sec-ord} for $u := \|\dot x(\cdot)\|^2$,
$$b_1(t):=\alpha(t),$$
$$b_2(t):=  \frac{\gamma(t)}{2\lambda(t)}$$
and
$$b_3(t):=\frac{\gamma^2(t)}{2\lambda(t)}-\frac{1}{\beta}$$
is fulfilled. 
By combining \eqref{s-o-nabla-gamma} and the first condition in \eqref{th6cond2} one obtains \eqref{hyp1}, while, by combining \eqref{s-o-nabla-gamma} and the second condition in \eqref{th6cond2} one obtains \eqref{hyp2}.

Furthermore, by taking into account the Lipschitz property of $\nabla g$ and the strong convexity of $g$, it yields 
$$\rho\beta\leq 1.$$
From \eqref{s-o-nabla-lambda}, \eqref{ulaplha} and $\ul\alpha>1$ we obtain $$\frac{\lambda(t)}{\beta}\geq\ul\alpha\frac{1}{\beta^2\rho^2}>1 \ \mbox{for every} \ t \in [0,+\infty),$$
which combined with \eqref{s-o-nabla-gamma} leads to $\ul\gamma>2$. 

The conclusion follows  from Lemma \ref{l-sec-ord}(ii), the strong convexity of $g$ and 
\eqref{desc-l}. 
\end{proof}

\begin{remark}\label{rem32}
In Theorem \ref{sec-ord-nabla} one can obviously chose $\alpha(t) = \alpha$, where $\alpha = \frac{2}{\beta^2\rho^2} - 1$, if $\beta\rho < 1$, or $\alpha = 1+ \varepsilon$, with $\varepsilon >0$, otherwise, $\lambda(t) = \lambda$ and $\gamma(t) = \gamma$ for every $t \in [0,+\infty)$, where
\begin{equation*}
\frac{\alpha}{\beta\rho^2}\leq \lambda \leq \frac{\beta}{2}\big(\alpha +\alpha^2 \big)\end{equation*}
and
\begin{equation*}
\frac{1+\sqrt{1+8\frac{\lambda}{\beta}}}{2}\leq\gamma \leq 1+\alpha. \end{equation*}
\end{remark}

\end{document}